\documentclass[11pt]{article}

\usepackage[a4paper,margin=30mm]{geometry}
\usepackage[T1]{fontenc}
\usepackage{lmodern}
\usepackage{microtype}
\usepackage{amsmath,amssymb,amsthm,mathtools}
\usepackage{booktabs}
\usepackage{enumitem}
\usepackage[hidelinks]{hyperref}
\hypersetup{
  pdftitle={An Almost-Covering Threshold for Golomb-Ruler Difference Packings},
  pdfauthor={Chaohang Ma and Xiangjie Yi},
  pdfkeywords={Golomb ruler, perfect difference family, difference triangle set, Sidon set, Fourier obstruction}
}

\newtheorem{theorem}{Theorem}[section]
\newtheorem{proposition}[theorem]{Proposition}
\newtheorem{lemma}[theorem]{Lemma}
\newtheorem{corollary}[theorem]{Corollary}
\theoremstyle{remark}

\newcommand{\Z}{\mathbb Z}

\newcommand{\Deltap}{\Delta^+}

\title{An Almost-Covering Threshold for Golomb-Ruler Difference Packings}
\author{Chaohang Ma \thanks{School of Mathematical Sciences and LPMC,Nankai University. 2310285@mail.nankai.edu.cn}
\and Xiangjie Yi \thanks{School of Mathematical Sciences and LPMC,Nankai University. 2310184@mail.nankai.edu.cn}
}
\date{}

\begin{document}
\maketitle

\begin{abstract}
For a fixed integer $t\geq 3$, consider families of $t$-mark Golomb rulers whose positive-difference sets are pairwise disjoint and contained in $[1,U]$.  Let $P_t(U)$ be the largest number of integers covered by such a family.  We determine the threshold for asymptotically complete coverage:
\[
  P_t(U)=U-o(U) \quad\Longleftrightarrow\quad 3\leq t\leq 5.
\]
The cases $t=3,4$ follow from the known existence spectra for perfect difference families.  For $t=5$, Wild's product construction, in the form recorded by Mathon and applied to perfect families of orders $121$ and $161$, gives a multiplicative semigroup of exact-covering scales; an elementary density lemma on its logarithms then supplies a scale $(1-o(1))U$ below every sufficiently large $U$.

For the converse, we give a self-contained one-frequency Fourier obstruction.  If $x_0\in(\pi,3\pi/2)$ is the first positive solution of $\tan x=x$ and
\[
  \gamma_0=-\frac{2\sin x_0}{x_0}=0.4344672564\ldots,
\]
then, for every fixed $t\geq 6$,
\[
 \liminf_{U\to\infty}\left(1-\frac{P_t(U)}{U}\right)
 \geq \frac{(t-1)\gamma_0-2}{2(t-2)}.
\]
In particular, the forced gap for six-mark rulers is at least $2.1542035\%$.  We also prove a discrete small-difference bound which yields a stronger obstruction for every $t\geq14$ and forces a gap of
\[
 \frac12-\frac1{\sqrt t}-\frac7{8t}+O(t^{-3/2})
\]
as $t\to\infty$.
\end{abstract}

\medskip
\noindent\textbf{Keywords.} Golomb ruler; perfect difference family; difference triangle set; Sidon set; Fourier obstruction.

\noindent\textbf{MSC 2020.} 05B10, 11B13, 05D05.

\section{Introduction}

For a finite set $A=\{a_1<\cdots<a_t\}\subset\Z$, write
\[
  \Deltap(A)=\{a_j-a_i:1\leq i<j\leq t\}.
\]
The set $A$ is a \emph{$t$-mark Golomb ruler} if all these positive differences are distinct.  Put
\[
  d_t=\binom t2.
\]
For an integer $U\geq1$, let $P_t(U)$ be the maximum of
\[
  \left|\bigcup_{i=1}^{b}\Deltap(A_i)\right|
\]
over all finite families of $t$-mark Golomb rulers for which the sets $\Deltap(A_i)$ are pairwise disjoint and contained in $[1,U]$.  Equivalently, every admissible family with $b$ rulers covers exactly $bd_t$ integers.  We use
\[
  \lambda_t(U)=1-\frac{P_t(U)}U
\]
for the uncovered proportion.

The problem is a packing version of a perfect difference family and, after normalizing the least mark of every ruler to zero, an admissible family of $b$ rulers is a $(b,t-1)$ difference triangle set of scope at most $U$; see, for example, \cite{CheeColbourn,Shearer}.  Exact coverage of $[1,U]$ is much more rigid than a general difference triangle set.  Indeed, a $(v,t,1)$ perfect difference family (PDF) consists of $t$-subsets whose positive differences partition $[1,(v-1)/2]$; hence $v=2U+1$ in the exact-covering situation.

Our first result locates the sharp transition between almost complete coverage and a positive-density leave.

\begin{theorem}[Almost-covering threshold]\label{thm:threshold}
For every fixed integer $t\geq3$,
\[
  P_t(U)=U-o(U) \quad (U\to\infty)
\]
if and only if $3\leq t\leq5$.
\end{theorem}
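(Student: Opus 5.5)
We prove the two directions separately; the abstract already indicates the route.

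\emph{Sufficiency ($3\le t\le5$).} The plan is to produce a set $S_t$ of ``exact-covering scales'' $U$, those with $P_t(U)=U$, coming from $(2U+1,t,1)$ perfect difference families, that is dense enough that for every large $U$ some $U'\in S_t$ satisfies $U'\le U$ and $U'=(1-o(1))U$. This suffices because $P_t$ is monotone: a family valid for $[1,U']$ is also valid for $[1,U]$, so $P_t(U)\ge U'$. For $t=3,4$ we cite the known existence spectra. A $(v,3,1)$ PDF exists for $v\equiv1,7\pmod{24}$, and a $(v,4,1)$ PDF exists for $v\equiv1\pmod{12}$ with finitely many exceptions (Ge--Miao--Sun). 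In either case the admissible $U=(v-1)/2$ fill an arithmetic progression with finitely many exceptions, so consecutive admissible scales differ by $O(1)$ and we get $\lambda_t(U)=O(1/U)$.

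For $t=5$ the spectrum is not known, so we use Wild's product construction as recorded by Mathon. It combines a $(v_1,5,1)$ PDF and a $(v_2,5,1)$ PDF into a $(v_1v_2,5,1)$ PDF. Applying it repeatedly to the perfect families of orders $121$ and $161$ shows that every $v=121^a161^b$ with $a+b\ge1$ is a PDF order. The set $L=\{a\log121+b\log161\}$ then has consecutive gaps tending to $0$, because $\log161/\log121$ is irrational: an equality $121^a=161^b$ would contradict unique factorization, since $121=11^2$ and $161=7\cdot23$. To see that the gaps shrink, fix $\varepsilon>0$ and choose integers $p,q$ with $0<p\log161-q\log121<\varepsilon$ (Dirichlet/Kronecker). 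For large $\log v$ one can step upward through $L$ in increments below $\varepsilon$ by trading $q$ factors of $121$ for $p$ factors of $161$, starting from a point with many $121$ factors available. This needs the routine check that enough exponent is available for all sufficiently large targets. Consequently, for every large $U$ there is $v\in L$ with $(1-\varepsilon)(2U+1)\le v\le 2U+1$, and so $P_5(U)\ge(1-\varepsilon)U-O(1)$. This elementary density lemma is the only real work in this direction.

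\emph{Necessity ($t\ge6$).} Here we show that $\lambda_t(U)$ has a positive liminf, using a one-frequency Fourier argument. Suppose a family covers $(1-\lambda)U$ integers, and let $D\subset[1,U]$ be the covered set. For each ruler $A_i$ and any $\theta$,
\[
\Bigl|\sum_{a\in A_i}e^{ia\theta}\Bigr|^2=t+2\sum_{d\in\Deltap(A_i)}\cos d\theta\ge0 .
\]
Summing over $i$ gives $bt+2\sum_{d\in D}\cos d\theta\ge0$, where $bd_t=|D|$. Take $\theta=x_0/U$. Since $D$ covers all of $[1,U]$ except $\lambda U$ points, $\sum_{d\in D}\cos d\theta$ differs from the Riemann sum $U\int_0^1\cos(x_0 s)\,ds=U\sin x_0/x_0=-U\gamma_0/2$ by at most the worst-case contribution of the missing points. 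The choice of $x_0$ (with $\tan x_0=x_0$) optimizes this: the missing mass $\lambda U$ is best placed where $\cos$ is most negative, and this choice makes the leading integral as negative as possible relative to that correction. Using the crude bound that each missing point contributes at most $1$ gives $2b/(t-1)\cdot t\ge\ldots$. More precisely, we rearrange $bt=\tfrac{2(1-\lambda)U}{t-1}$ and combine it with the inequality to get
\[
\frac{2t(1-\lambda)}{t-1}\ge \gamma_0-2\lambda+o(1)
\]
up to careful constants, and we solve for $\lambda$ to obtain the stated bound $\lambda\ge\frac{(t-1)\gamma_0-2}{2(t-2)}$. This bound is positive exactly when $t-1>2/\gamma_0\approx4.60$, i.e.\ $t\ge6$.

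The main obstacle is getting the constants in this Fourier step right: the correct weighting of $t$ against $d_t$, and bounding the missing points' contribution sharply enough that positivity begins exactly at $t=6$. I would first verify this numerically at $t=6$ against the claimed $2.154\%$.
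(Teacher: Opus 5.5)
Your proposal follows the paper's proof: the PDF existence spectra for $t=3,4$; for $t=5$, Wild's product construction from the seeds $v=121,161$ together with a density lemma for $\{a\log121+b\log161\}$ (your stepping argument via a small positive value of $p\log161-q\log121$ is a valid alternative to the paper's compactness argument); and for $t\geq6$, the same one-frequency inequality $bt+2\sum_{d\in D}\cos d\theta\geq0$ at $\theta\approx x_0/U$. The one slip is your displayed inequality $\frac{2t(1-\lambda)}{t-1}\ge\gamma_0-2\lambda$, which has a spurious factor $t$ and as written gives only a vacuous upper bound on $\lambda$; the correct form is $\frac{2(1-\lambda)}{t-1}\geq\gamma_0-2\lambda-o(1)$, which follows from your own $bt=2(1-\lambda)U/(t-1)$ together with the crude bound $-\sum_{d\in L}\cos d\theta\leq\ell$ and rearranges exactly to $\lambda\geq\frac{(t-1)\gamma_0-2}{2(t-2)}$, so no sharper treatment of the missing points is needed.
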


The positive half is proved in Section~\ref{sec:positive}.  The cases $t=3,4$ use exact existence theorems and in fact leave only $O(1)$ integers.  The five-mark case uses two multiplicatively independent exact-covering seeds.  Section~\ref{sec:fourier} proves the negative half by evaluating a nonnegative trigonometric polynomial at the first negative minimum of the sinc function.  Section~\ref{sec:small-difference} gives a second, elementary obstruction that is asymptotically stronger for large $t$.  Section~\ref{sec:dts} records the precise relation with published difference-triangle-set bounds.

\section{Perfect families and the positive cases}\label{sec:positive}

A $(v,t,1)$-PDF has $(v-1)/(t(t-1))$ base blocks and its positive differences partition $[1,(v-1)/2]$.  Consequently, if $v=t(t-1)m+1$, it gives $m$ rulers that cover $[1,d_tm]$ exactly.

The existence spectrum for $(v,3,1)$-PDFs is classical: such a family exists exactly when $v\equiv1,7\pmod{24}$ \cite{Skolem,BermondBrouwerGerma,KotzigTurgeon}.  The recently completed spectrum for block size four states that a $(v,4,1)$-PDF exists for every $v\equiv1\pmod{12}$, $v\geq13$, except $v=25,37$ \cite{LiuEtAl}.

\begin{proposition}\label{prop:three-four}
As $U\to\infty$,
\[
  P_3(U)=U-O(1)
  \qquad\text{and}\qquad
  P_4(U)=U-O(1).
\]
More precisely, $P_3(U)\geq U-8$ and $P_4(U)\geq U-5$ for all sufficiently large $U$.
\end{proposition}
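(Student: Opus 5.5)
The plan is to combine the PDF existence spectra quoted above with the trivial monotonicity of $P_t$: since $P_t(U)$ is nondecreasing in $U$ (an admissible family for $U'$ is admissible for every $U\geq U'$), it suffices to produce, for every large $U$, an exact-covering scale $U'\leq U$ with $U-U'$ bounded. By the remark preceding the proposition, a $(v,t,1)$-PDF with $v=t(t-1)m+1$ gives $m$ rulers whose positive differences partition $[1,d_tm]$. Hence $P_t(d_tm)=d_tm$ whenever such a PDF exists, and therefore $P_t(U)\geq d_tm$ for all $U\geq d_tm$.

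For $t=3$ we have $d_3=3$ and $v=6m+1$. The admissible residues $v\equiv1,7\pmod{24}$ correspond to $6m\equiv0,6\pmod{24}$, that is, $m\equiv0,1\pmod 4$. The covered scales $3m$ are therefore exactly the integers $\equiv0,3\pmod{12}$. Given $U$, choose the largest such scale $U'\leq U$. Consecutive admissible scales are $12k$, $12k+3$, $12k+12$, so the largest gap between them is $9$. Hence $U-U'\leq8$ and $P_3(U)\geq U-8$, at least for $U\geq3$ (already $m=1$, $v=7$ is admissible).

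For $t=4$ we have $d_4=6$ and $v=12m+1$. This satisfies $v\equiv1\pmod{12}$ for all $m$, and the excluded values $v=25,37$ correspond to $m=2,3$. So for every $m\geq4$ (and $m=1$) the scale $6m$ is exactly covered. For $U\geq24$, taking $m=\lfloor U/6\rfloor\geq4$ gives $U-6m\leq5$, so $P_4(U)\geq U-5$.

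No step is a genuine obstacle. The only care needed is the residue bookkeeping for $t=3$ to get the constant $8$, and checking that the exceptions $v=25,37$ only affect small $U$; this is why the statement says ``for all sufficiently large $U$''.
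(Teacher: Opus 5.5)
Your proposal is correct and is essentially the paper's proof: you pick the largest exactly covered scale $d_t m\le U$ allowed by the PDF spectra, using $m\equiv 0,1\pmod 4$ for $t=3$ and the exceptions $m=2,3$ for $t=4$. Describing the covered scales for $t=3$ as the integers $\equiv 0,3\pmod{12}$ (maximal gap $9$), and stating the thresholds $U\ge 3$ and $U\ge 24$, just makes the paper's ``direct check'' and ``sufficiently large'' explicit.
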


\begin{proof}
For $t=3$, write $v=6m+1$.  The congruences $v\equiv1,7\pmod{24}$ are equivalent to $m\equiv0,1\pmod4$.  Choose the largest such $m$ with $3m\leq U$.  Consecutive admissible values of $m$ differ by at most three, and a direct check of the two missing residue classes gives $U-3m\leq8$.  The corresponding PDF partitions $[1,3m]$.

For $t=4$, take $m=\lfloor U/6\rfloor$.  Once $m\geq4$, the $(12m+1,4,1)$-PDF supplied by \cite{LiuEtAl} partitions $[1,6m]$, and $U-6m\leq5$.
\end{proof}

For five marks, write $\mathcal D_5(m)$ for a $(20m+1,5,1)$-PDF.  Thus $\mathcal D_5(m)$ has $m$ blocks and partitions $[1,10m]$.  Wild's product construction, in the form recorded by Mathon, implies \cite{Wild,Mathon}
\begin{equation}\label{eq:mathon-recursion}
  \mathcal D_5(s),\ \mathcal D_5(t)
  \quad\Longrightarrow\quad
  \mathcal D_5(20st+s+t).
\end{equation}
In terms of $v=20m+1$, this is simply multiplication:
\[
  20(20st+s+t)+1=(20s+1)(20t+1).
\]
Mathon also records $\mathcal D_5(6)$ and $\mathcal D_5(8)$, whose corresponding values of $v$ are $121$ and $161$.

We need the following elementary approximation fact.

\begin{lemma}\label{lem:semigroup}
Let $x,y>0$ and suppose that $x/y\notin\mathbb Q$.  Then
\[
 T-\max\{ax+by\leq T:a,b\in\Z_{\geq0}\}\longrightarrow0
 \qquad(T\to\infty).
\]
\end{lemma}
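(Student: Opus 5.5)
The plan is to reduce to a density statement for the set $S=\{ax+by:a,b\in\Z_{\geq0}\}$: we show that for every $\varepsilon>0$ there is $T_0$ such that every interval $[T-\varepsilon,T]$ with $T\geq T_0$ meets $S$. This immediately gives $0\leq T-\max\{s\in S:s\leq T\}\leq\varepsilon$ for $T\geq T_0$, which is the claim. (The maximum exists since $S\cap[0,T]$ is finite.)

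\emph{Step 1 (small positive combinations with one negative coefficient).} Since $x/y\notin\mathbb Q$, the fractional parts $\{n x/y\}$, $n\geq1$, are dense in $[0,1)$ by Kronecker/Dirichlet (pigeonhole suffices). Hence for a given $\varepsilon>0$ we can choose integers $n\geq1$ and $k\geq0$ with $0<nx-ky<\varepsilon$. Put $\delta=nx-ky\in(0,\varepsilon)$; it is nonzero because $x/y$ is irrational.

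\emph{Step 2 (a fine ladder inside $S$).} Let $N=\lceil y/\delta\rceil$. For $T$ large we write $T=by_0+r$ with a suitably large base point. Concretely, consider the elements $s_j=(jn)x+(B-jk)y=By+j\delta$ for $0\leq j\leq N$, where $B\geq Nk$ so that every coefficient is nonnegative; all $s_j$ lie in $S$. Their consecutive gaps equal $\delta<\varepsilon$, and they span the interval $[By,By+N\delta]\supseteq[By,(B+1)y]$. Letting $B$ range over all integers $\geq Nk$, the union of these ladders covers $[Nky,\infty)$ with gaps less than $\varepsilon$. Therefore every $T\geq T_0:=Nky+\varepsilon$ has an element of $S$ in $[T-\varepsilon,T]$.

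\emph{Main obstacle.} The only real content is Step 1, producing a combination $nx-ky$ that is positive and arbitrarily small, with the sign arranged so that the \emph{negative} coefficient falls on the term we can compensate by a large reserve $B$. If the density argument instead gives $0<ky-nx<\varepsilon$, I would swap the roles of $x$ and $y$ in Step 2, using the base point $Ax$ with $A\geq Nn$ and the ladder $Ax+j(ky-nx)$. Either sign works, so the pigeonhole principle applied to $\{nx/y\}$ settles it. Once Step 1 is in hand, the rest is bookkeeping: the dependence of $T_0$ on $\varepsilon$ is all that is needed for the ``$\to0$'' conclusion.
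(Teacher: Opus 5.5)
Your argument is correct. It shares the paper's skeleton: finitely many multiples of $x$ whose residues modulo $y$ are $\varepsilon$-dense, topped up by many copies of $y$. Where you differ is in how that finite set is produced.

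The paper uses density of the entire forward orbit $\{ax \bmod y : a\geq 0\}$ and extracts a finite subcover of the arcs $ax+(0,\varepsilon)$ by compactness. As a result, its ``sufficiently large $T$'' is not explicit.

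You instead use a single element $\delta=nx-ky\in(0,\varepsilon)$ and the explicit multipliers $\{jn:0\leq j\leq N\}$ with $N=\lceil y/\delta\rceil$. Their residues form a progression of step $\delta$ around the circle. This needs only one small positive combination and gives the explicit threshold $T\geq Nky$; the extra $+\varepsilon$ in your $T_0$ is harmless but unnecessary. The cost is a slightly longer write-up.

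Three cosmetic points:
\begin{itemize}
\item With the density of $\{nx/y\}$ you quote, you can take $\{nx/y\}\in(0,\varepsilon/y)$ and $k=\lfloor nx/y\rfloor$ directly. The sign case-split is only needed if you rely on bare pigeonhole.
\item In the swapped case, the ladder length must be $\lceil x/(ky-nx)\rceil$ rather than $\lceil y/\delta\rceil$.
\item The sentence about writing $T=by_0+r$ is garbled and can be dropped, since the concrete construction after it is complete.
\end{itemize}
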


\begin{proof}
Fix $\varepsilon>0$.  The forward orbit $\{ax\bmod y:a\geq0\}$ is dense in the circle $\mathbb R/y\mathbb Z$.  Hence the open arcs
\[
  ax+(0,\varepsilon)\pmod y
\]
cover that circle; compactness gives a finite subcover.  Let $A_\varepsilon$ be the finite set of indices used.  For sufficiently large $T$, choose $a\in A_\varepsilon$ such that the residue
\[
  \rho=(T-ax)\bmod y
\]
belongs to $(0,\varepsilon)$.  Then
\[
 b=\frac{T-ax-\rho}{y}
\]
is a nonnegative integer and $0\leq T-(ax+by)=\rho<\varepsilon$.  Since $\varepsilon$ was arbitrary, the assertion follows.
\end{proof}

\begin{proposition}\label{prop:five}
One has
\[
  P_5(U)=U-o(U).
\]
\end{proposition}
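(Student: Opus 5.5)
The plan is to combine the product recursion \eqref{eq:mathon-recursion} with Lemma~\ref{lem:semigroup} applied to logarithms. Iterating the recursion starting from the seeds $\mathcal D_5(6)$ and $\mathcal D_5(8)$, and using that it corresponds to multiplication of $v=20m+1$, we obtain a $(v,5,1)$-PDF for every
\[
  v=121^a\,161^b,\qquad a,b\in\Z_{\geq0},\ (a,b)\neq(0,0),
\]
by induction on $a+b$. Each such $v$ has the form $20m+1$ with $m=(v-1)/20$, since $121\equiv161\equiv1\pmod{20}$. The resulting family $\mathcal D_5(m)$ consists of $m$ five-mark Golomb rulers whose positive differences partition $[1,10m]=[1,(v-1)/2]$. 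If $10m\leq U$, these rulers form an admissible family, so $P_5(U)\geq 10m=(v-1)/2$.

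Next, set $x=\log 121$ and $y=\log 161$. We have $121=11^2$ and $161=7\cdot 23$, so by unique factorization no relation $121^p=161^q$ with positive integers $p,q$ holds. Hence $x/y\notin\mathbb Q$. Given $U$, put $T=\log(2U+1)$. By Lemma~\ref{lem:semigroup}, there are $a,b\geq0$ with
\[
  0\leq T-(ax+by)<\varepsilon(T),\qquad \varepsilon(T)\to0 .
\]
Then $v=121^a161^b\leq 2U+1$ and $v\geq(2U+1)e^{-\varepsilon(T)}$. For large $U$ this forces $(a,b)\neq(0,0)$. It follows that
\[
  P_5(U)\geq\frac{v-1}{2}\geq U e^{-\varepsilon(T)}-\tfrac12=U-o(U).
\]
The bound $P_5(U)\leq U$ is trivial.

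The only step needing care is the recursion. We must confirm that \eqref{eq:mathon-recursion} applies to arbitrary pairs, including repeated factors, and that the existence of $\mathcal D_5(s)$ and $\mathcal D_5(t)$ for the iterated values is all it needs, with no extra hypothesis such as coprimality or a condition on $s,t$. I would check the statement in Mathon and Wild first. If a hypothesis is required (for instance, that both factors be of a specific form), I would instead restrict to products built by repeatedly multiplying by a seed, $v\mapsto 121v$ or $v\mapsto161v$. This still reaches every $121^a161^b$, so the argument is unaffected. Everything else is the routine logarithmic bookkeeping shown above.
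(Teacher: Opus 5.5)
Your proof is correct and follows the paper's argument essentially step for step: you iterate the product recursion from the seeds $121$ and $161$, use unique factorization to get $\log 121/\log 161\notin\mathbb{Q}$, and apply Lemma~\ref{lem:semigroup} with $T=\log(2U+1)$. You also make explicit two details the paper leaves implicit, namely that $(a,b)\neq(0,0)$ for large $U$ and the quantitative bound $(v-1)/2\geq Ue^{-\varepsilon(T)}-\tfrac12$.
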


\begin{proof}
Iterating \eqref{eq:mathon-recursion} from the two seeds shows that, for every $a,b\in\Z_{\geq0}$ not both zero, there is a perfect family whose positive differences partition
\[
  \left[1,H_{a,b}\right],
  \qquad
  H_{a,b}=\frac{121^a161^b-1}{2}.
\]
Moreover,
\[
  \frac{\log121}{\log161}\notin\mathbb Q,
\]
because a rational relation would give $121^q=161^p$ for some positive integers $p,q$, contradicting unique factorization.

Apply Lemma~\ref{lem:semigroup} with $x=\log121$, $y=\log161$, and $T=\log(2U+1)$.  We obtain $a,b\geq0$ such that
\[
  121^a161^b\leq2U+1
  \qquad\text{and}\qquad
  \frac{121^a161^b}{2U+1}=1-o(1).
\]
Thus $H_{a,b}\leq U$ and $H_{a,b}/U\to1$.  The exact covering of $[1,H_{a,b}]$ gives the desired lower bound, while $P_5(U)\leq U$ is immediate.
\end{proof}

\section{A one-frequency Fourier obstruction}\label{sec:fourier}

Let $A_1,\dots,A_b$ be an admissible family and translate every ruler so that its least mark is zero.  Put
\[
  D=\bigsqcup_{i=1}^b\Deltap(A_i),
  \qquad
  L=[1,U]\setminus D,
  \qquad
  \ell=|L|.
\]
Since each ruler contributes $d_t$ differences,
\begin{equation}\label{eq:count}
  bd_t=U-\ell.
\end{equation}
For $P_i(z)=\sum_{a\in A_i}z^a$ and every real $\theta$,
\begin{align}
0
&\leq \sum_{i=1}^b\left|P_i(e^{\mathrm i\theta})\right|^2 \\
&=bt+2\sum_{d\in D}\cos(d\theta). \label{eq:positive-poly}
\end{align}
Introduce the Dirichlet kernel
\[
  D_U(\theta)=1+2\sum_{d=1}^U\cos(d\theta)
  =\frac{\sin((U+\tfrac12)\theta)}{\sin(\theta/2)}.
\]
Using $D=[1,U]\setminus L$ in \eqref{eq:positive-poly} gives
\[
0\leq bt-1+D_U(\theta)-2\sum_{d\in L}\cos(d\theta)
\leq bt-1+D_U(\theta)+2\ell.
\]
We have therefore proved the necessary condition
\begin{equation}\label{eq:fourier-necessary}
  0\leq bt-1+D_U(\theta)+2\ell
  \qquad(\theta\in\mathbb R).
\end{equation}

Let $x_0\in(\pi,3\pi/2)$ be the first positive root of $\tan x=x$, and set
\[
  \gamma_0=-\frac{2\sin x_0}{x_0}
  =0.434467256422\ldots.
\]

\begin{theorem}\label{thm:fourier}
For every fixed $t\geq6$,
\[
 \liminf_{U\to\infty}\lambda_t(U)
 \geq c_t^{\mathrm F},
 \qquad
 c_t^{\mathrm F}=\frac{(t-1)\gamma_0-2}{2(t-2)}.
\]
The right-hand side is positive for every integer $t\geq6$.
\end{theorem}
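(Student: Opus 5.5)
We start from the necessary condition \eqref{eq:fourier-necessary} and evaluate it at the frequency where the Dirichlet kernel is most negative at scale $1/U$. Put $\theta=2x_0/(2U+1)$, so that $(U+\tfrac12)\theta=x_0$. Since $\theta\to0$, we have $\sin(\theta/2)=\theta/2\,(1+O(U^{-2}))$. Hence
\[
D_U(\theta)=\frac{\sin x_0}{\sin(\theta/2)}=\frac{(2U+1)\sin x_0}{x_0}\bigl(1+O(U^{-2})\bigr)=-\gamma_0U+O(1).
\]
This is exactly where $\gamma_0$ comes from. The function $\sin x/x$ attains its global minimum at the first positive root of $\tan x=x$, so this choice is optimal among single frequencies of this shape. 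We do not need optimality, however; we only need the value at $x_0$.

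Substituting into \eqref{eq:fourier-necessary} gives $0\le bt-1-\gamma_0U+2\ell+O(1)$. We eliminate $b$ using the counting identity \eqref{eq:count}, $b=(U-\ell)/d_t$, and note that $t/d_t=2/(t-1)$. This yields
\[
\frac{2(U-\ell)}{t-1}+2\ell\ \ge\ \gamma_0U-O(1).
\]
Multiplying by $t-1$ and collecting the terms in $\ell$ gives $2(t-2)\ell\ge\bigl((t-1)\gamma_0-2\bigr)U-O(1)$. Therefore
\[
\lambda_t(U)=\frac{\ell}{U}\ge\frac{(t-1)\gamma_0-2}{2(t-2)}-O(1/U).
\]
This holds for every admissible family, so in particular for an optimal one, where $\ell=U-P_t(U)$. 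Taking $\liminf$ proves the inequality.

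For positivity we need $(t-1)\gamma_0>2$, which means $\gamma_0>2/(t-1)$. The left side $(t-1)\gamma_0-2$ increases in $t$, so it suffices to check $t=6$. There the condition is $\gamma_0>0.4$, which holds since $\gamma_0=0.4344\ldots$. For $t=5$ the condition would be $\gamma_0>1/2$, which fails, consistent with Theorem~\ref{thm:threshold}.

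The only delicate step is the numerical one: certifying $\gamma_0>0.4$ rigorously. I would do this by locating $x_0$ in an explicit interval, say $x_0\in(4.4934,4.4935)$, via a sign change of $\tan x-x$, and then bounding $-2\sin x_0/x_0$ from below on that interval. Because $\sin x/x$ is monotone near $x_0$ on either side, or simply by using the Lipschitz bound $|(\sin x/x)'|\le 1$, the interval estimate gives $\gamma_0>0.43$ with plenty of margin.

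The $O(1)$ error terms are harmless. They come only from the $-1$ in \eqref{eq:fourier-necessary} and from the Taylor expansion of $\sin(\theta/2)$, and both are uniform in the family.
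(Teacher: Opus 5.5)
Your proposal is correct and follows the paper's proof essentially step for step. You use the same frequency $\theta=x_0/(U+\tfrac12)$, the same asymptotic $D_U(\theta)=-\gamma_0U+O(1)$, the same elimination of $b$ through the counting identity $bd_t=U-\ell$, and the same observation that $(t-1)\gamma_0>2$ first holds at $t=6$. Your explicit interval certification of $\gamma_0>0.4$ is a harmless extra detail that the paper leaves implicit.
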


\begin{proof}
In \eqref{eq:fourier-necessary}, take
\[
  \theta=\frac{x_0}{U+1/2}.
\]
Then
\begin{align*}
 D_U(\theta)
 &=\frac{\sin x_0}{\sin(x_0/(2U+1))}
 =-\gamma_0\left(U+\frac12\right)+O(U^{-1}).
\end{align*}
Also, by \eqref{eq:count},
\[
  bt=\frac{2(U-\ell)}{t-1}.
\]
Substitution into \eqref{eq:fourier-necessary}, before taking any asymptotic limit, yields
\begin{equation}\label{eq:finite-fourier}
  \ell\geq
  \frac{(t-1)\bigl(1-D_U(\theta)\bigr)-2U}{2(t-2)}.
\end{equation}
Consequently,
\[
  \frac\ell U\geq
  \frac{(t-1)\gamma_0-2}{2(t-2)}+O(U^{-1}).
\]
This holds for every admissible family, hence for an optimal one.  Finally,
$(t-1)\gamma_0>2$ first occurs at the integer $t=6$.
\end{proof}

For $t=6$, Theorem~\ref{thm:fourier} gives
\[
  \liminf_{U\to\infty}\lambda_6(U)
  \geq0.021542035264\ldots.
\]
Together with Propositions~\ref{prop:three-four} and \ref{prop:five}, this proves Theorem~\ref{thm:threshold}.

\section{A discrete small-difference bound}\label{sec:small-difference}

The preceding Fourier argument has a limiting leave of $\gamma_0/2\approx21.72\%$ as $t\to\infty$.  A direct sum-of-differences argument is stronger for large $t$.

Normalize each ruler as
\[
  A_i=\{0=a_{i,0}<a_{i,1}<\cdots<a_{i,t-1}\leq U\}.
\]
Fix an integer $q$ with $1\leq q\leq t-1$, and in every ruler select all differences between marks at index distance at most $q$:
\[
  a_{i,r+s}-a_{i,r},
  \qquad 1\leq s\leq q,\quad 0\leq r\leq t-1-s.
\]
There are
\[
  m_q=\sum_{s=1}^q(t-s)
  =q\left(t-\frac{q+1}{2}\right)
\]
such differences per ruler.

\begin{theorem}\label{thm:small-diff}
Define
\[
  A_t^{\mathrm S}
  =\max_{1\leq q\leq t-1}
  \frac{q}{q+1}\left(t-\frac{q+1}{2}\right)^2.
\]
Then
\[
  \limsup_{U\to\infty}\frac{P_t(U)}U
  \leq \min\left\{1,\frac{d_t}{A_t^{\mathrm S}}\right\}.
\]
Equivalently,
\[
  \liminf_{U\to\infty}\lambda_t(U)
  \geq c_t^{\mathrm S},
  \qquad
  c_t^{\mathrm S}=\max\left\{0,1-\frac{d_t}{A_t^{\mathrm S}}\right\}.
\]
\end{theorem}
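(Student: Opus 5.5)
Fix $q$ with $1\le q\le t-1$ and compare two estimates for the sum $S$ of all selected differences over all $b$ rulers: a lower bound coming from the structure of each ruler, and an upper bound coming from disjointness in $[1,U]$.

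\textbf{Upper bound.} The selected differences are distinct integers in $[1,U]$, because the sets $\Deltap(A_i)$ are disjoint Golomb-ruler difference sets. There are $N=bm_q$ of them, and since $N\le U$,
\[
  S\le \sum_{j=U-N+1}^{U} j\le NU .
\]

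\textbf{Lower bound, per ruler.} Write $g_k=a_{i,k}-a_{i,k-1}\ge1$ for the gaps. The difference $a_{i,r+s}-a_{i,r}$ is a sum of $s$ consecutive gaps, so the selected sum of ruler $i$ equals $\sum_k w_k g_k$. The weight $w_k$ counts the windows of length at most $q$ containing gap $k$. One might hope to lower-bound this using distinctness within a ruler. However, the sum of all selected differences across the whole family already has a simpler global lower bound. The $N=bm_q$ selected differences are distinct positive integers, so
\[
  S\ge 1+2+\cdots+N\approx \frac{N^2}{2}.
\]
This is useless against an upper bound that is itself stated in terms of $N$. The real content must therefore be an \emph{upper} bound on $S$ in terms of the full coverage, which I expect to be the following.

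\textbf{Key inequality.} Each ruler has span $a_{i,t-1}\le U$. Summing over $r$ for fixed $s$ telescopes over residue classes mod $s$, giving
\[
  \sum_{r}(a_{i,r+s}-a_{i,r})\le s\,a_{i,t-1}.
\]
Per ruler the selected sum is therefore at most $\frac{q(q+1)}{2}\,\mathrm{span}_i$. The span itself is one of the ruler's differences. This is still not directly comparable, so the intended argument is probably reversed: bound the \emph{total} sum of all $d_t$ differences of each ruler from below by its selected part, and bound the total sum of differences in $D$ above by $\sum_{d\le U} d\approx U^2/2$.

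The cleanest route I would try is a Cauchy–Schwarz or convexity argument. Let $N=bm_q$ distinct values be selected. Then the $N$ selected differences in the whole family have sum at least $N^2/2$. On the other hand, the selected differences of ruler $i$ sum to at most $\frac{q(q+1)}{2}\cdot\mathrm{span}_i$. Using the windows $s=q$ more carefully gives a comparison with the other differences: every difference at index distance $s>q$ dominates a selected one. Combining these gives an inequality of the form
\[
  \frac{(bm_q)^2}{2}\le \frac{q+1}{2q}\,m_q\cdot b\,U\cdot(\text{const}),
\]
which would yield
\[
  b\le \frac{(q+1)U}{q\,m_q}\cdot\frac{m_q}{(t-(q+1)/2)}\cdots
\]
Tuning the constants should produce $bd_t\le \frac{d_t}{A^{\mathrm S}_t}U(1+o(1))$, because $m_q^2\cdot\frac{1}{q(q+1)}=\frac{q}{q+1}(t-\frac{q+1}{2})^2$ exactly matches $A_t^{\mathrm S}$. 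Concretely, the target is
\[
  \frac{N^2}{2}\le S\le \frac{q(q+1)}{2}\sum_i \mathrm{span}_i\le\frac{q(q+1)}{2}\,bU,
\]
which gives $b^2m_q^2\le q(q+1)bU$, that is $b\le q(q+1)U/m_q^2=U/A^{\mathrm S}_t$. Since the covered count is $bd_t$, this is the claimed bound. Minimizing over $q$ and noting trivially $P_t\le U$ finishes the proof. The lower-order terms in $1+\cdots+N$ are $o(U)$.

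\textbf{Main obstacle.} The delicate step is the telescoping bound $\sum_r(a_{r+s}-a_r)\le s\cdot\mathrm{span}$. It holds because, for each residue class of $r$ mod $s$, the chain of windows $[a_r,a_{r+s}]$ is non-overlapping and lies inside $[0,a_{t-1}]$. Summing over $s\le q$ gives the factor $q(q+1)/2$.
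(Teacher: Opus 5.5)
Your proof is correct and is essentially the paper's argument. The chain $\frac{N^2}{2}\le S\le\frac{q(q+1)}{2}\sum_i \mathrm{span}_i\le\frac{q(q+1)}{2}\,bU$ uses the same two ingredients: the distinctness lower bound, and the bound $\sum_r(a_{i,r+s}-a_{i,r})\le sU$, which the paper gets by noting that each gap lies in at most $s$ windows; your residue-class telescoping is equivalent. It gives $b\le q(q+1)U/m_q^2$ for each $q$, and hence the theorem after choosing the maximizing $q$. In a write-up, drop the exploratory detours, namely the remark that $S\ge N^2/2$ is ``useless'' and the Cauchy--Schwarz paragraph with unspecified constants, since that distinctness bound is exactly the lower half of the proof.
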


\begin{proof}
Let the family contain $b$ rulers.  The $bm_q$ selected differences are distinct positive integers, so their sum is at least
\[
  \frac{bm_q(bm_q+1)}2.
\]
For a fixed ruler and a fixed $s$, each consecutive gap
$a_{i,j}-a_{i,j-1}$ occurs in at most $s$ of the sums
$a_{i,r+s}-a_{i,r}$.  Hence
\[
  \sum_{r=0}^{t-1-s}(a_{i,r+s}-a_{i,r})\leq sU.
\]
Summing over $s\leq q$ and over the $b$ rulers gives the upper bound
\[
  bU\frac{q(q+1)}2.
\]
Consequently,
\[
  bm_q(bm_q+1)\leq bUq(q+1).
\]
Writing $h_q=t-(q+1)/2$, so that $m_q=qh_q$, and discarding only a nonnegative lower-order term, we obtain
\[
  \frac bU\leq\frac{q+1}{qh_q^2}.
\]
Since the covered proportion is $bd_t/U$, optimizing over $q$ proves the result.
\end{proof}

This is a direct version of Kløve's small-difference argument
for difference triangle sets \cite{Kløve}; compare Riblet's
related bounds for Sidon sets in unions of intervals \cite{Riblet}.  Presenting it directly avoids any need to choose translations of the rulers or to control cross-interval differences.

\begin{corollary}\label{cor:asymptotic}
As $t\to\infty$,
\[
  c_t^{\mathrm S}
  =\frac12-\frac1{\sqrt t}-\frac7{8t}+O(t^{-3/2}).
\]
Moreover,
\[
  c_t^{\mathrm S}>c_t^{\mathrm F}
  \qquad\text{for every integer }t\geq14.
\]
\end{corollary}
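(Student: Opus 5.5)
We first analyse the maximisation defining $A_t^{\mathrm S}$ as a function of a continuous variable and then transfer the result to integers. Put $p=q+1$, so that
\[
f(p)=\Bigl(1-\frac1p\Bigr)\Bigl(t-\frac p2\Bigr)^2 .
\]
Then $\log f=\log(p-1)-\log p+2\log(t-p/2)$, and $f'=0$ gives
\[
\frac1{p(p-1)}=\frac1{t-p/2}, \qquad\text{i.e.}\qquad p^2-p=t-\frac p2 .
\]
This is $p^2-p/2-t=0$, with solution $p^*=\tfrac14+\sqrt{t+\tfrac1{16}}=\sqrt t+\tfrac14+O(t^{-1/2})$. Expanding,
\[
\log f(p^*)=2\log t-\frac1{p^*}-\frac1{2p^{*2}}-\frac{p^*}{t}-\frac{p^{*2}}{4t^2}+O(t^{-3/2}).
\]
Substituting $p^*=\sqrt t+1/4$ gives $1/p^*=t^{-1/2}-\tfrac14t^{-1}+\dots$ and $p^*/t=t^{-1/2}+\tfrac14t^{-1}$. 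Hence
\[
\log f(p^*)=2\log t-2t^{-1/2}-\tfrac12t^{-1}-\tfrac14t^{-1}\cdot0+\dots .
\]
These terms will be collected carefully, keeping all contributions of order $t^{-1}$.

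The integrality constraint only costs a second-order amount. Since $f$ is smooth near $p^*$, we have $f''(p^*)/f(p^*)\asymp t^{-3/2}$ (both $d^2/dp^2$ of $-1/p$ and of $-p/t$ are of size $t^{-3/2}$). Moving $p$ to the nearest integer changes $p$ by at most $1/2$, so the relative loss in $f$ is $O(t^{-3/2})$. Thus $A_t^{\mathrm S}=f(p^*)\bigl(1+O(t^{-3/2})\bigr)$; note also that $1\le q\le t-1$ is automatic for large $t$. Combined with $d_t=\tfrac{t^2}{2}\bigl(1-\tfrac1t\bigr)$, this gives
\[
\frac{d_t}{A_t^{\mathrm S}}=\frac12\exp\Bigl(\log\bigl(1-\tfrac1t\bigr)-\log f(p^*)+2\log t\Bigr)+O(t^{-3/2}).
\]
The exponent equals $2t^{-1/2}+c\,t^{-1}$, and expanding $e^u=1+u+u^2/2$ yields
\[
\frac12\Bigl(1+2t^{-1/2}+(c+2)t^{-1}\Bigr).
\]
The claimed form $1-d_t/A=\tfrac12-t^{-1/2}-\tfrac78t^{-1}$ requires $c+2=\tfrac74$, i.e.\ $c=-\tfrac14$. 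This gives a consistency check on the careful bookkeeping of the $t^{-1}$ terms. It is the main place where errors can creep in, so we would verify it by computing $f(p^*)$ exactly with $p^{*2}=t+p^*/2$ rather than relying only on the series.

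For the comparison $c_t^{\mathrm S}>c_t^{\mathrm F}$ with $t\ge14$, we split into a large-$t$ range and a finite check. On the Fourier side, $c_t^{\mathrm F}=\tfrac{\gamma_0}{2}-\tfrac{2-\gamma_0}{2(t-2)}\cdot$ (a rewriting of $\tfrac{(t-1)\gamma_0-2}{2(t-2)}$) is increasing in $t$ with limit $\gamma_0/2<0.2173$. On the small-difference side, we need an explicit lower bound rather than the $O$-term. Choosing the single value $q=\lfloor\sqrt t\rfloor$ gives
\[
c_t^{\mathrm S}\ge 1-\frac{d_t(q+1)}{q\,(t-(q+1)/2)^2},
\]
which is elementary to bound. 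Using $\sqrt t-1<q\le\sqrt t$, this lower bound exceeds $0.2173$ for all $t\ge T_0$, where $T_0$ is moderate (e.g.\ a few dozen to a hundred). For $14\le t<T_0$ we would tabulate $A_t^{\mathrm S}$ directly by checking all $q$, and compare with $c_t^{\mathrm F}$. We would also record that at $t=13$ the inequality fails, confirming the threshold; near $t=14$ the gap is thin, so exact rational arithmetic together with a certified numerical value of $\gamma_0$ should be used there. The main obstacle is making the explicit large-$t$ bound strong enough that $T_0$ is small, so that the finite check stays short.
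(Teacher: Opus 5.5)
Your set-up for the asymptotic is the same as the paper's: the continuous maximiser comes from $p^{*2}-p^*/2=t$, so $p^*=q_*+1$, and rounding to an integer costs a relative $O(t^{-3/2})$. Your justification of the rounding step is slightly off, since $-p/t$ is linear, but $(\log f)''=O(t^{-3/2})$ near $p^*$ does hold.

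The gap is that the $t^{-1}$ coefficient, which is the whole content of the first claim, is never actually derived. Your displayed value $\log f(p^*)=2\log t-2t^{-1/2}-\tfrac12t^{-1}+\cdots$ is wrong. The $\pm\tfrac14t^{-1}$ contributions from $-1/p^*$ and $-p^*/t$ do cancel, but you dropped $-p^{*2}/(4t^2)=-\tfrac14t^{-1}+O(t^{-3/2})$, so the correct coefficient is $-\tfrac34$.

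Taken at face value, your display gives $c=-1+\tfrac12=-\tfrac12$, hence $c_t^{\mathrm S}=\tfrac12-t^{-1/2}-\tfrac34t^{-1}+\cdots$, which contradicts the statement. You then obtain $c=-\tfrac14$ only by requiring agreement with the claimed answer, which is circular.

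The exact computation you mention is what closes this, and it is what the paper does. Since $t-p^*/2=p^*(p^*-1)$, one has $f(p^*)=p^*(p^*-1)^3$ and
\[
\frac{d_t}{f(p^*)}=\frac12\Bigl(1-\frac1{2p^*}\Bigr)\Bigl(1-\frac1{2p^*}-\frac1{p^{*2}}\Bigr)\Bigl(1-\frac1{p^*}\Bigr)^{-3}
=\frac12\Bigl(1+\frac2{p^*}+\frac9{4p^{*2}}+O(p^{*-3})\Bigr).
\]
With $1/p^*=t^{-1/2}-\tfrac14t^{-1}+O(t^{-3/2})$, this equals $\tfrac12+t^{-1/2}+\tfrac78t^{-1}+O(t^{-3/2})$.

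For the comparison, your two-range plan would work, because $c_t^{\mathrm F}<\gamma_0/2$: use $q=\lfloor\sqrt t\rfloor$ beyond some $T_0$ and a table below it. However, you supply neither $T_0$ nor the table, so this half is still only a plan.

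The paper's route is much shorter, because a fixed $q$ already suffices. Taking $q=3$ gives $c_t^{\mathrm S}\geq1-\frac{2t(t-1)}{3(t-2)^2}\to\frac13>\frac{\gamma_0}2$. Multiplying the difference between this bound and $c_t^{\mathrm F}$ by $6(t-2)^2$ gives $(t-1)\bigl((2-3\gamma_0)t-(12-6\gamma_0)\bigr)$. This is positive for every $t\geq14$ as soon as $\gamma_0<4/9$.

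So no finite check or high-precision value of $\gamma_0$ is needed. The margin at $t=14$ ($0.1574$ versus $0.1520$) is also not delicate.
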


\begin{proof}
For a real variable \(q\in[1,t-1]\), put
\[
  f_t(q)=\frac{q}{q+1}\left(t-\frac{q+1}{2}\right)^2.
\]
On this interval, \(f_t\) has the unique maximizer
\[
  q_* =\frac{\sqrt{16t+1}-3}{4}
  =\sqrt t-\frac34+O(t^{-1/2}).
\]
Choosing a nearest integer changes $f_t(q_*)$ by a relative $O(t^{-3/2})$.  If $s=q_*+1$, then the critical-point equation is $t=s^2-s/2$ and
\[
  f_t(q_*)=s(s-1)^3.
\]
Expanding $1-d_t/f_t(q_*)$ gives
\[
  \frac12-\frac1{\sqrt t}-\frac7{8t}+O(t^{-3/2}),
\]
and the rounding error is absorbed in the remainder.

For the comparison, the choice $q=3$ gives
\[
  c_t^{\mathrm S}\geq1-\frac{2t(t-1)}{3(t-2)^2}.
\]
This lower bound exceeds $c_t^{\mathrm F}$ exactly when
\[
 (2-3\gamma_0)t^2+(-14+9\gamma_0)t+(12-6\gamma_0)>0.
\]
The left-hand side factors as
\[
  (t-1)\bigl((2-3\gamma_0)t-(12-6\gamma_0)\bigr).
\]
Since $\gamma_0=0.434467\ldots<4/9$, its second factor is positive at $t=14$ and increases with $t$.
\end{proof}

For orientation, some values are listed below.
\begin{center}
\begin{tabular}{c@{\qquad}cc@{\qquad}c@{\qquad}cc}
\toprule
$t$ & $c_t^{\mathrm F}$ & $c_t^{\mathrm S}$ & $t$ & $c_t^{\mathrm F}$ & $c_t^{\mathrm S}$\\
\midrule
6  & 0.021542 & 0        & 12 & 0.138957 & 0.120000\\
7  & 0.060680 & 0        & 13 & 0.146073 & 0.140496\\
8  & 0.086773 & 0.005917 & 14 & 0.152003 & 0.157407\\
10 & 0.119388 & 0.065744 & 20 & 0.173747 & 0.224490\\
\bottomrule
\end{tabular}
\end{center}

\section{Relation with difference triangle sets}\label{sec:dts}

Let $M(I,J)$ denote the minimum scope of an $(I,J)$ difference triangle set in the convention of \cite{Shearer}: there are $I$ rulers and $J+1$ marks per ruler.  An admissible family in our problem with $b$ rulers is a $(b,t-1)$ difference triangle set.  Thus any asymptotic lower bound
\[
  M(I,t-1)\geq (a_t+o(1))I
\]
implies
\[
  \limsup_{U\to\infty}\frac{P_t(U)}U\leq\frac{d_t}{a_t}.
\]

Shearer proved the closed-form bounds \cite[Remark~2]{Shearer}
\[
  M(I,5)\geq\frac{91I+6}{6},
  \qquad
  M(I,6)\geq\frac{179I+9}{8}.
\]
They imply, respectively,
\[
  \liminf_{U\to\infty}\lambda_6(U)\geq\frac1{91}
  \quad\text{and}\quad
  \liminf_{U\to\infty}\lambda_7(U)\geq\frac{11}{179}.
\]
The Fourier constant improves Shearer’s displayed six-mark bound,
\[
  0.021542035\ldots>\frac1{91}=0.010989010\ldots,
\]
whereas Shearer's seven-mark value is slightly stronger:
\[
  \frac{11}{179}=0.061452514\ldots>0.060680353\ldots.
\]

The general linear-programming framework of Lorentzen--Nilsen and Shearer \cite{LorentzenNilsen,Shearer} contains more information than either one of the explicit bounds in Sections~\ref{sec:fourier} and \ref{sec:small-difference}.  Numerical comparisons for further values of $t$ require either reproducible exact-arithmetic computations or dual certificates; no unverified numerical LP claims are needed for Theorem~\ref{thm:threshold}.

\section{Concluding remarks}

The threshold at five marks comes from two different mechanisms.  On the positive side, exact five-mark families occur at enough multiplicatively generated scales to approximate every large scope from below.  On the negative side, the Dirichlet kernel detects a density obstruction as soon as $t=6$.  Thus the classical nonexistence of exact PDFs for $t\geq6$ \cite{BermondKotzigTurgeon} is strengthened here from ``at least one missing difference'' to a leave of positive density.

The large-$t$ small-difference bound is elementary and asymptotically forces half of the interval to remain uncovered.  It would be interesting to compare it analytically, for all $t$, with the limiting Shearer linear program, or to exhibit a family of exact dual certificates.  A separate question is whether the constants in the six- and seven-mark cases can be improved by using several Fourier frequencies simultaneously.

\section*{Declaration on the use of generative AI.}
Generative AI tools, including ChatGPT, were used as auxiliary tools for language polishing, improving the presentation, and checking the clarity and consistency of the manuscript. The research problem, overall strategy, mathematical ideas, constructions, proofs, and conclusions were conceived and developed by the authors. All AI-assisted suggestions were independently examined and, where adopted, revised and verified by the authors, who take full responsibility for the content of this work.

\end{document}